\theoremstyle{plain}
\newtheorem{theorem}{Theorem}
\newtheorem{corollary}[theorem]{Corollary}
\begin{document}

\title{The number of multinomial coefficients based on a set of partitions of $n$ into $k$ parts and divided by $k$ evenly(A200144)}
		 
\date{}
\author{Dmitry Kruchinin\\
Tomsk State University of Control Systems and Radioelectronics,\\
 Russian Federation\\
kdv@keva.tusur.ru}
\maketitle

\begin{abstract}
In this paper we obtained an original integer sequence based on the properties of the multinomial coefficient.
We investigated a property of the sequence that  shows connection with a primality testing.
For any prime $n$ the $n$-th term  in the sequence is less by 1 than the number of partitions of $n$.
We hypothesize the existence of an asymptotic algorithm of primality testing.
\end{abstract}

\section{Introduction}
In this paper we study the properties of the integer sequence 
The sequence A200144\cite{oeis} is given by the number of multinomial coefficients based on a set of partitions of $n$ into $k$ parts such that the multinomial coefficient is divided by $k$ evenly.

Now we shall give the following notation.
Let $c_k=\{\lambda_1,\lambda_2,\ldots,\lambda_k\}$ be  a composition of a natural number $n$ into $k$ parts, where $\lambda_i>0, i=1..k$ $\{\lambda_1+\lambda_2+\cdots+\lambda_k=n\}$. Then by $C_n$ denote a set of all compositions of $n$ into $k$ parts.
Let $\pi_k=\{\lambda_1,\lambda_2,\ldots,\lambda_k\}$ be  a partition of a natural number $n$ into $k$ parts, where $\lambda_i>0, i=1..k$ $\{\lambda_1+\lambda_2+\cdots+\lambda_k=n\}$. Then by $P_n$ denote a set of all partitions of $n$ into $k$ parts.

The multinomial coefficient based on the partition $\pi_k$ is 
\begin{equation}
\label{multi}
b(\pi_k)=\binom{k}{j_1,j_2,\ldots ,j_m}=\frac{k!}{j_1!j_2!\cdots j_m!},
\end{equation} 
where $j_i$ is a number of equal $\lambda_i$ in the partition $\pi_k$. 

Let us consider   the method of obtaining the sequence A200144\cite{oeis} and its properties.
\section{Main Theorems}
\begin{theorem}
The sum
\begin{equation}
		\label{sum1}
			\sum_{k=1}^n\frac{n\cdot p_k}{k}\sum_{c_k\in C_n} a_{\lambda_1}a_{\lambda_2}\cdots a_{\lambda_k} 
		\end{equation} 
	is an integer for any integer sequences $a_1, a_2, \ldots, a_n$  and $p_1, p_2, \ldots, p_n$.
\end{theorem}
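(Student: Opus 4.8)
The plan is to pass to ordinary generating functions. Set $A(x) = \sum_{i \ge 1} a_i x^i$, the generating function of the integer sequence $(a_i)$; it has no constant term because every part $\lambda_i$ of a composition is positive. Since a composition $c_k = (\lambda_1, \ldots, \lambda_k) \in C_n$ is precisely an ordered $k$-tuple of positive integers summing to $n$, the product rule for generating functions gives
\begin{equation*}
\sum_{c_k \in C_n} a_{\lambda_1} a_{\lambda_2} \cdots a_{\lambda_k} = [x^n]\, A(x)^k,
\end{equation*}
where $[x^n]$ denotes coefficient extraction. Hence the sum (\ref{sum1}) equals $\sum_{k=1}^n p_k \cdot \frac{n}{k}\,[x^n]A(x)^k$. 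It therefore suffices to prove the per-$k$ claim that $\frac{n}{k}\,[x^n]A(x)^k \in \mathbb{Z}$ for every $1 \le k \le n$, for then (\ref{sum1}) is an integer linear combination of integers. (Choosing $(p_k)$ to be a single Kronecker spike shows this reduction is lossless, so no generality is lost.)

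The key step is the identity
\begin{equation*}
\frac{n}{k}\,[x^n]\,A(x)^k = [x^{n-1}]\,A(x)^{k-1} A'(x),
\end{equation*}
which I would obtain by differentiating. Writing $A(x)^k = \sum_m c_m x^m$, on one hand $\frac{d}{dx}A(x)^k = \sum_m m\,c_m x^{m-1}$, whose coefficient of $x^{n-1}$ is $n c_n = n\,[x^n]A(x)^k$; on the other hand the chain rule gives $\frac{d}{dx}A(x)^k = k\,A(x)^{k-1}A'(x)$. Equating the two coefficients of $x^{n-1}$ and dividing by $k$ yields the displayed identity, which is exactly what absorbs the offending denominator.

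Finally I would settle integrality directly. Since $A(x)$ has integer coefficients, so does its formal derivative $A'(x) = \sum_{i\ge1} i\,a_i x^{i-1}$ and so does the power $A(x)^{k-1}$; hence the product $A(x)^{k-1}A'(x)$ is a power series with integer coefficients, and in particular $[x^{n-1}]A(x)^{k-1}A'(x) \in \mathbb{Z}$. This establishes the per-$k$ claim, and multiplying by the integer $p_k$ and summing over $k$ shows (\ref{sum1}) is an integer. The only genuinely delicate point, and the place I would be most careful, is recognizing that the factor $1/k$ is exactly cancelled by the differentiation identity; once the sum is rephrased through $A(x)$, the remainder is a one-line integer-coefficient observation. (As minor bookkeeping, $A(x)^k$ begins at $x^k$, so the terms with $k > n$ vanish, consistent with the summation range.)
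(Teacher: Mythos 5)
Your proof is correct, and it runs on the same engine as the paper's --- differentiating a generating function to absorb the denominator $k$ --- but the decomposition is genuinely different and worth contrasting. The paper never isolates a single $k$: it encodes the whole weighted sum as the coefficients of the composition $G(x)=R(F(x))$, where $F(x)=\sum_i a_i x^i$ and $R$ is the term-by-term antiderivative of $P(x)=\sum_k p_k x^{k-1}$, then observes via the chain rule that $G'(x)=F'(x)\,P(F(x))$ has integer coefficients, so that $n\,g(n)$, which is exactly the sum (\ref{sum1}), is an integer. You instead prove the sharper per-$k$ statement $\frac{n}{k}[x^n]A(x)^k\in\mathbb{Z}$ via the identity $n\,[x^n]A(x)^k=k\,[x^{n-1}]A(x)^{k-1}A'(x)$, and recover (\ref{sum1}) as an integer linear combination. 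Your route is more elementary (no composition of series, no appeal to the integrality of the superposition $P(F(x))$) and it yields more: the per-$k$ integrality is precisely what the paper must extract afterwards, in the proof of its Theorem 2, by specializing all but one $p_k$ to zero; your argument delivers that intermediate fact directly, making the arbitrary sequence $(p_k)$ a trivial afterthought rather than a structural ingredient. What the paper's packaging buys in exchange is the single clean identity $G'=F'\cdot P(F)$, which explains why the statement is naturally formulated with the extra weights $p_k$ in the first place.
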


\begin{proof}
Let us construct a generating function $F(x)=a_1x+a_2x^2+\cdots+a_nx^n$. Then 
$$
F^{\Delta}(n,k)=\sum_{c_k\in C_n} a_{\lambda_1}a_{\lambda_2}\cdots a_{\lambda_k}
$$
is the coefficient of the powers of the generating function $F(x)$\cite{KruchininVV_2010}.

Let us construct a generating function $P(x)=p_1+p_2x+\cdots+p_nx^{n-1}$ with integer coefficients. 
Integrating $P(x)$ in $x$, we obtain the generating function 
$$R(x)=\frac{p_1}{1}x+\frac{p_2}{2}x^2+\cdots+\frac{p_n}{n}x^n,$$
where the constant term is zero.

For the superposition of generating functions  $G(x)=R\left(F(x)\right)$  the function of coefficients has the form
$$
g(n)=\sum_{k=1}^n \frac{p_k\cdot F^{\Delta}(n,k)}{k},
$$
$$
G(x)=\sum_{n>0} g(n)x^n.
$$

If we consider derivative $G'(x)$
we obtain the following expression
\begin{equation}
\label{derivative}
G'(x)=\left( R\left(F(x)\right)\right)'=F'(x)P\left(F(x)\right)
\end{equation}
Let us consider the expression (\ref{derivative}) as a product of the generating functions $F'(x)$ and the superposition $P\left(F(x)\right)$. The coefficients of $F'(x)$ are integers. The coefficients of the superposition of generating functions $P\left(F(x)\right)$ are also integers by virtue of the fact that
$F(x),P(x)$ are generating functions with integer coefficients.
The product of functions with integer coefficients  also has integer coefficients.

Therefore, the coefficients of the generating function
$$
G'(x)=g_1+2g_2x^1+\cdots+ng_nx^{n-1}
$$
are integers.

Hence the value of the expression 
$$
ng(n)=\sum_{k=1}^n \frac{n\cdot p_k\cdot F^{\Delta}(n,k)}{k}=\sum_{k=1}^n\frac{n\cdot p_k}{k}\sum_{c_k\in C_n} a_{\lambda_1}a_{\lambda_2}\cdots a_{\lambda_k} 
$$
is an integer for any integer sequences $a_1, a_2, \ldots, a_n$  and $p_1, p_2, \ldots, p_n$.
\end{proof}

The basis of the formula (\ref{sum1}) is the compositions of a natural number $n$ into $k$ parts.

Using a connection between compositions and partitions, consider the integer expression (\ref{sum1}) by multinomial coefficient:
\begin{equation}
\label{sum2}
\sum_{k=1}^n\frac{n\cdot p_k}{k}\sum_{c_k\in C_n} a_{\lambda_1}a_{\lambda_2}\cdots a_{\lambda_k} =
\sum_{k=1}^n\frac{n\cdot p_k}{k}\sum_{\pi_k \in P_n} b(\pi_k)\cdot a_{\lambda_1}a_{\lambda_2}\cdots a_{\lambda_k},	
\end{equation}	
where $p_k, a_{\lambda}$ are integers.

\begin{theorem}
The expression $\frac{n\cdot b(\pi_k)}{k}$ is an integer for all partitions $\pi_k\in P_n$.
\end{theorem}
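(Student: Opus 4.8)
The plan is to give a direct proof by decomposing $n$ according to the distinct part-values of the partition and recognizing each resulting summand as an ordinary multinomial coefficient. Write the distinct values occurring in $\pi_k$ as $d_1,\dots,d_m$ with respective multiplicities $j_1,\dots,j_m$, so that $k=\sum_{i=1}^m j_i$, $\;n=\sum_{i=1}^m d_i j_i$, and $b(\pi_k)=k!/(j_1!\cdots j_m!)$. Substituting the expansion of $n$ gives
\[
\frac{n\cdot b(\pi_k)}{k}=\sum_{i=1}^m d_i\cdot\frac{j_i\cdot b(\pi_k)}{k},
\]
so it suffices to prove that each factor $\dfrac{j_i\,b(\pi_k)}{k}$ is an integer; since the $d_i$ are positive integers, the whole expression is then an integer combination of integers.

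The key step is the absorption identity $j_i/j_i!=1/(j_i-1)!$ together with $k!/k=(k-1)!$. Applying both, I would compute
\[
\frac{j_i\cdot b(\pi_k)}{k}=\frac{j_i}{k}\cdot\frac{k!}{j_1!\cdots j_m!}=\frac{(k-1)!}{j_1!\cdots(j_i-1)!\cdots j_m!}.
\]
Then I would observe that the arguments of the factorials in the denominator sum to $(j_i-1)+\sum_{l\neq i}j_l=k-1$, which is exactly the argument of the numerator factorial. Hence the right-hand side is the multinomial coefficient $\binom{k-1}{\,j_1,\dots,j_i-1,\dots,j_m\,}$, a nonnegative integer (the case $j_i=1$ is harmless, as the entry becomes $0$ and $0!=1$). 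Combining this with the displayed decomposition yields
\[
\frac{n\cdot b(\pi_k)}{k}=\sum_{i=1}^m d_i\binom{k-1}{\,j_1,\dots,j_i-1,\dots,j_m\,}\in\mathbb{Z},
\]
for every $\pi_k\in P_n$, which is the claim.

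The point that must be handled carefully is the recognition that, after absorption, the denominator arguments still sum to the numerator index $k-1$, so that a genuine multinomial coefficient appears; this is where the hypothesis $n=\sum_i d_i j_i$ (that the parts sum to $n$) is essential, since it is precisely the weighting by the part-values $d_i$ that converts integrality of the reduced multinomials into integrality of $\frac{n\,b(\pi_k)}{k}$. I would also remark that, although equation (\ref{sum2}) together with Theorem 1 already shows that the full sum over all partitions is an integer, those results do not by themselves isolate a single partition $\pi_k$, so the direct route above is both cleaner and strictly stronger.
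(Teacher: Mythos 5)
Your proof is correct, and it takes a genuinely different route from the paper. You prove the statement directly for a single partition: writing $n=\sum_{i=1}^m d_i j_i$ over the distinct part-values, the absorption identities $j_i/j_i!=1/(j_i-1)!$ and $k!/k=(k-1)!$ turn each summand $d_i\, j_i\, b(\pi_k)/k$ into $d_i\binom{k-1}{j_1,\dots,j_i-1,\dots,j_m}$, so $\frac{n\cdot b(\pi_k)}{k}$ is exhibited as an explicit nonnegative integer combination of multinomial coefficients with upper index $k-1$. The paper instead tries to deduce the termwise claim from Theorem 1: it specializes the free parameters $p_k$ to isolate a single $k$, then specializes the $a_\lambda$ to restrict the sum to partitions on a fixed alphabet, and finally appeals to the Thue--Siegel--Roth theorem to argue that divisibility of the surviving sum by $k$ must be ``due to the coefficients'' $n\cdot b(\pi_k)$. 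That last step is not rigorous: Roth's theorem is a finiteness statement about integer solutions of Thue equations and cannot isolate individual terms of a polynomial all of whose integer values are divisible by $k$ (indeed, a polynomial such as $y^2-y$ has all values divisible by $2$ without having coefficients divisible by $2$, so even a correct value-divisibility argument would not yield coefficientwise divisibility); moreover, the paper's assumption that some $a_{\lambda_1}$ can always be chosen so that its exponents separate all terms is left unjustified. Your argument is elementary, self-contained (it does not even need Theorem 1), constructive in that it identifies the quotient explicitly, and it repairs the gap in the paper's own proof; it also yields Corollary \ref{Cor} at once, since $\gcd(n,k)=1$ together with $k\mid n\cdot b(\pi_k)$ forces $k\mid b(\pi_k)$.
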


\begin{proof}
Suppose that there exists the non-integer term $s_{k_0}$ in the expression (\ref{sum2}), where 

$$s_k=\frac{n\cdot p_k}{k}\sum_{\pi_k \in P_n} b(\pi_k)\cdot a_{\lambda_1}a_{\lambda_2}\cdots a_{\lambda_k}.$$

Since $p_k$ are any integers, we equate all the $p_k$ to zero, except $p_{k_0}$.
Then there exists a unique term  $s_{k}$ in the expression (\ref{sum2}) such that $s_k=s_{k_0}$ is  a non-integer term. It follows that the expression (\ref{sum2}) is not an integer. That is false.

Therefore, the value of expression
\begin{equation}
\label{sum3}
\frac{n}{k}\sum_{\pi_k \in P_n} b(\pi_k)\cdot a_{\lambda_1}a_{\lambda_2}\cdots a_{\lambda_k},	
\end{equation}
is an integer for any $k$.

Now we choose an arbitrary partition $\pi_k$. Fix an alphabet of values $\lambda_i$ such that $\lambda_i \in  \pi_k$.
Since $a_i$ are any integers, we equate all the $a_{\lambda_i}$ to zero such that $\lambda_i$ do not belong  to the alphabet.

So we get the integer expression:
\begin{equation}
\label{sum4}
\frac{n}{k}\sum_{\pi_k \in P_n} b(\pi_k)\cdot a_{\lambda_1}^{j_{\lambda_1}}\cdots {a_{\lambda_m}}^{j_{\lambda_m}}, 
\end{equation}
where $\pi_k=\{\lambda_1,\lambda_2,\ldots,\lambda_k\}$ consists of the fixed values of $\lambda_i$; 
$j_{\lambda_1}+\cdots+j_{\lambda_m}=k$; $j_{\lambda_i}>1$, because if there exists at least one $j_{\lambda_i}=1$, then the expression $\frac{b(\pi_k)}{k}$ is always integer.

Now we consider the following equation:
\begin{equation}
\label{Thue}
\alpha_0x^n+\alpha _1x^{n-1}y+\alpha _2x^{n-2}y^2+\cdots +\alpha_ny^n=k\cdot c
\end{equation}
where $n>2$ is integer and $\alpha_0,\alpha_1,\alpha_2,\ldots ,\alpha_n,c$ are also integer.

According the Thue--Siegel--Roth theorem\cite{Davenport}, the equation (\ref{Thue}) has only a finite number of solutions in integers $x$ and $y$.

Let us get the expression (\ref{sum4}) in the form (\ref{Thue}).
We choose one parameter  $a_{\lambda_1}$ such that the degree of $a_{\lambda_1}$ is different in the different term of the expression (\ref{sum4}) and equate the other $a_{\lambda_i}$ to $1$.

So we get 
\begin{equation}
\label{suma1}
\sum_{\pi_k \in P_n}n\cdot b(\pi_k)\cdot a_{\lambda_1}^{j_{\lambda_1}}=k\cdot c.
\end{equation}
where $x=1$, $y=a_{\lambda_1}$, $\alpha_i=n\cdot b(\pi_k)$, $c$ is an integer. 

Therefore, according \cite{Davenport}, in the general case, the divisibility is due to the coefficients $\alpha_i$.
Hence, the expression $\frac{n\cdot b(\pi_k)}{k}$ is integer for all partitions of a natural number $n$ into $k$ parts.
\end{proof}

\begin{corollary}
\label{Cor}
If $n$, $k$ are relatively prime, then for all partitions $\pi_k \in P_n$ the number of parts $k$ evenly divides the multinomial coefficient $b(\pi_k)$.
\end{corollary}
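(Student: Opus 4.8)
The plan is to derive the corollary directly from Theorem~2, which has already established that $\frac{n\cdot b(\pi_k)}{k}$ is an integer for every partition $\pi_k\in P_n$. In divisibility language this says precisely that $k$ divides the product $n\cdot b(\pi_k)$. The task is then to strip away the factor $n$ using the hypothesis $\gcd(n,k)=1$.

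First I would restate Theorem~2 as the divisibility relation $k \mid n\cdot b(\pi_k)$, valid for each fixed $k$ and each partition $\pi_k$ of $n$ into $k$ parts. This is the only structural input required; no further property of the multinomial coefficient $b(\pi_k)$ enters the argument, so the proof does not need to revisit the factorial expression in~(\ref{multi}) or the Thue--Siegel--Roth machinery of the preceding proof.

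Next I would invoke the elementary number-theoretic fact (Euclid's lemma in its coprime form): if $k \mid a\cdot b$ and $\gcd(k,a)=1$, then $k \mid b$. Applying this with $a=n$ and $b=b(\pi_k)$, the assumption that $n$ and $k$ are relatively prime forces $k \mid b(\pi_k)$, which is exactly the assertion that $k$ evenly divides $b(\pi_k)$. Since the partition $\pi_k$ was arbitrary, the conclusion holds for all $\pi_k\in P_n$, completing the argument.

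There is no genuine obstacle here: the corollary is an immediate consequence of Theorem~2 combined with the coprimality hypothesis. The only point worth flagging is that the full strength of Theorem~2 --- divisibility of $n\cdot b(\pi_k)$ by $k$, rather than of $b(\pi_k)$ alone --- is exactly what makes the coprimality assumption indispensable; without it the factor $n$ could absorb a common divisor with $k$, and the conclusion $k \mid b(\pi_k)$ would in general fail.
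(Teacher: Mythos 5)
Your proposal is correct and matches the paper's (implicit) reasoning exactly: the paper offers no separate proof of the corollary, treating it as an immediate consequence of Theorem~2, which is precisely your deduction that $k \mid n\cdot b(\pi_k)$ together with $\gcd(n,k)=1$ yields $k \mid b(\pi_k)$ by Euclid's lemma. Nothing further is needed.
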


If $n$, $k$ are not relatively prime and $\gcd(n,k)=k$, then
we can share the $n$ to the sum of equal $\lambda_i$. Therefore, the number of equal $\lambda_i$ in the partition $\pi_k$ is equal to $k$.
Then the multinomial coefficient is equal to one: $b(\pi_k)=\frac{k!}{k!}=1$. The divisibility is due to the multiplication by $n$.

\section{Generation of the sequence A200144\cite{oeis}}

Let us consider a construction of integer sequence such that is given by the number of multinomial coefficients based on a set of partitions of $n$ into $k$ parts, where the multinomial coefficient is divided by $k$ evenly.

For counting the number of multinomial coefficients such that the multinomial coefficient is divided by $k$ evenly, we use the following algorithm.

\begin{algorithmic}[1]
\label{algo}
\REQUIRE $n$;
\FOR {$k=1,\ldots,n$}
	\FORALL {partitions of $n$ into $k$ parts}
		\FORALL {different values of $\lambda_i$ in the partition $\pi_k$}
		\STATE count $j_i$ such that $j_i$ is the number of equal $\lambda_i$ in the partition $\pi_k$;
		\ENDFOR
		\STATE calculate the multinomial coefficient $b(\pi_k)$, according to the formula (\ref{multi});
		\STATE divide by $k$;
		\IF {result is integer} 
		\STATE increase the number of multinomial coefficients such that the multinomial coefficient is divided by $k$ evenly: $d:=d+1$; 
		\ENDIF
	\ENDFOR
\ENDFOR
\PRINT $d$.
\end{algorithmic}

For generating the set of all partitions of $n$ into a fixed number of parts we can use Algorithm H (partition into k parts) developed in C.~F.~Hindenburg's dissertation\cite{Knut4}.

\textbf{Example 1.}
Let us consider the algorithm (\ref{algo}) with a small value of $n$ for one value of $k$ that is the steps from 2 to 11.

Let $n$ be $7$.

Then the set of partitions of $n$ into $k=4$ parts is
$$[1,1,1,4];
[1,1,2,3];
[1,2,2,2].$$

Therefore, the number $j_i$ of different parts in each partition respectively is
$$[3,1];
[2,1,1];
[1,3].$$

The multinomial coefficients $b(\pi_k)$ divided by k respectively are 
$$\frac{4!}{4\cdot(3!\cdot 1!)}=1,\qquad d:=d+1;$$
$$\frac{4!}{4\cdot(2!\cdot 1!\cdot 1!)}=2,\qquad d:=d+1;$$
$$\frac{4!}{4\cdot(1!\cdot 3!)}=1,\qquad d:=d+1.$$

Using the algorithm (\ref{algo}), we obtain a original integer sequence. 
The first 20 elements of the sequence are shown below:
\begin{equation}
\label{seq}
1, 1, 2, 3, 6, 7, 14, 17, 27, 34, 55, 64, 100, 121, 167, 213, 296, 354, 489, 594,\ldots
\end{equation}
The sequence is registered in  the online encyclopedia of integer sequences with the number A200144\cite{oeis}.

Modeling the sequence (\ref{seq}), we observe a monotonic increase. Hence, let us formulate the following hypothesis.

\textbf{Hypothesis 1.}
The integer sequence A200144\cite{oeis} is  monotonically increasing.

\section{Property of the sequence A200144\cite{oeis}}
To show a property of the sequence A200144\cite{oeis}, we consider the sequence A000041\cite{oeis} that is the number of partitions of a natural number $n$.
\begin{equation}
\label{seq1}
1, 2, 3, 5, 7, 11, 15, 22, 30, 42, 56, 77, 101, 135, 176, 231, 297, 385, 490, 627,\ldots
\end{equation}

Comparing the sequence (\ref{seq}) with the sequence A000041\cite{oeis}, we see that for any prime $n$ the $n$-th term  in the sequence (\ref{seq}) is less by 1 than in the sequence (\ref{seq1}) respectively.

The reason is that for primes $n$ $\gcd(n,k)=1$, where $k<n$.
Hence, all the   multinomial coefficients based on the set of partitions of $n$ into $k$ parts are divided by $k$ evenly, except  the case $k=n$: $b(\pi_n)=1$.

An important result is that for primes $n$ the number of multinomial coefficients based on the set of partitions of $n$ into $k$ parts, divided by  $k$ evenly, is less by 1 than the number of partitions of a natural number $n$.
Thus,there is a connection between the partitions of a natural numbers and a primality testing.

\section{Conclusion}
Researches, associated with partitions, show that for a monotonically increasing sequence, associated with partitions, there exists an asymptotic method for calculating terms.
This enables to make the following hypothesis that for integers there exists an algorithm of asymptotic calculating the number of partitions such that the multinomial coefficient is divided by the number of parts in the partition evenly.
If such an expression is found, we will obtain the asymptotic algorithm of primality testing.

2000 Mathematics Subject Classification: 
Primary 11P81; Secondary 11B65, 11A41, 05A10.

Keywords: multinomial coefficient, partition, integer sequence, divisibility, primes.

\end{document}